\title[Polynomial representations and Schur-Weyl duality]{Polynomial
  representations of $\GL(n)$ and Schur-Weyl duality}
\thanks{Version from November 4, 2013.}
\author{Henning Krause}
\address{Henning Krause\\ Fakult\"at f\"ur Mathematik\\
  Universit\"at Bielefeld\\ D-33501 Bielefeld\\ Germany.}
\email{hkrause@math.uni-bielefeld.de}
\newtheorem{lem}{Lemma}
\newtheorem{cor}[lem]{Corollary}
\newtheorem{thm}[lem]{Theorem}
\theoremstyle{remark}
\newtheorem{rem}[lem]{Remark}
\theoremstyle{definition}
\newtheorem{defn}[lem]{Definition}
\newcommand{\End}{\operatorname{End}\nolimits}
\newcommand{\GL}{\operatorname{GL}\nolimits}
\newcommand{\TS}{\operatorname{TS}\nolimits}
\newcommand{\lto}{\longrightarrow}
\def\a{\alpha}
\def\e{\varepsilon}
\def\p{\phi}
\def\s{\sigma}
\def\la{\lambda}
\def\bbF{{\mathbb F}}
\def\bbN{{\mathbb N}}
\def\bbQ{{\mathbb Q}}
\def\bbZ{{\mathbb Z}}
\begin{document}

\begin{abstract} 
  Polynomial representations of general linear
  groups and modules over Schur algebras are compared. We work over an
  arbitrary commutative ring and show that Schur-Weyl duality is the
  key for an equivalence between both categories.
\end{abstract}

\maketitle
\setcounter{tocdepth}{1}

Polynomial representations of general linear groups can be identified
with modules over Schur algebras. This follows from work of Schur
\cite{Sc1927} for the field of complex numbers and has been extended
to infinite fields by Green \cite{Gr1980}. The situation for finite
fields seems to be less clear. In fact, Benson and Doty pointed out
that Schur-Weyl duality may fail if the field is too small \cite{BD2009}.

For any commutative ring $k$ and any pair $n,d$ of natural numbers,
there is a canonical functor from modules over the Schur algebra
$S_k(n,d)$ to degree $d$ polynomial representations of $\GL_k(n)$.
Assuming that Schur-Weyl duality holds, we show that this functor is
an equivalence. The converse is true when $k$ is a field.

\begin{center}
*\quad *\quad *
\end{center}

Throughout we fix a commutative ring $k$. For a positive integer $n$
let $\GL_k(n)$ denote the group of invertible $n\times n$-matrices
over $k$. A \emph{representation} of  $\GL_k(n)$ over $k$ is a $k$-module
$V$ together with a group homomorphism
\[\rho\colon \GL_k(n)\lto \GL_k(V)\]
into the group of $k$-linear automorphisms of $V$. Given an integer
$d\ge 0$, we call a representation $\rho$ \emph{polynomial of degree
  $d$} if it can be lifted to a map
\[\hat\rho\colon \End_k(k^n)\lto \End_k(V)\]
which is homogeneous polynomial of degree $d$.

Recall from \cite[IV.5.9]{Bo1981} the definition of a polynomial
map. Given a pair of $k$-modules $M,N$ such that $M$ is free, a map
$f\colon M\to N$ is \emph{homogeneous polynomial of degree
  $d$} if there exists a basis $(x_i)_{i\in I}$ of $M$ and a family
$(y_\nu)_{\nu\in\bbN^{(I)},\, |\nu|=d}$ of elements in $N$ such that
\[f\Big(\sum_{i\in I}\la_ix_i\Big)=\sum _{\nu\in\bbN^{(I)},\, |\nu|=d}\la^\nu
y_\nu\] for all $(\la_i)\in k^{(I)}$, where  $|\nu|=\sum_{i\in
  I}\nu_i$ and $\la^\nu=\prod_{i\in
  I}\la_i^{\nu_i}$.

The definition of a polynomial representation coincides with the usual
one\footnote{Given a basis $(v_i)_{i\in I}$ of $V$, there are
  homogeneous polynomials $f_{ij}\in k[X_{rs}]$ of degree $d$ in $n^2$
  indeterminates such that $\rho (\a)(v_i)=\sum_{j\in I}f_{ij}(\a_{rs})
  v_j$ for each $\a=(\a_{rs})$ in $\GL_k(n)$.}  when $k$ is a field
(cf.\ \cite[2.2]{Gr1980} or \cite[Appendix~A.8]{MacD1995}).

Set $E=k^n$. The symmetric group $\mathfrak S_d$ acts on $E^{\otimes
d}=E\otimes_k\ldots \otimes_k E$ via 
\begin{equation}\label{eq:action}
\s(x_1\otimes\ldots\otimes x_d)=x_{\s(1)} \otimes\ldots\otimes
x_{\s(d)}\tag{$*$}
\end{equation}
and this induces a conjugation action on the endomorphisms of
$E^{\otimes d}$. The \emph{Schur algebra} is by definition the algebra
\[S_k(n,d)=\End_k(E^{\otimes d})^{\mathfrak S_d}\] of
$\mathfrak S_d$-invariant endomorphisms \cite[2.3]{Gr1980}. The
diagonal action of $\GL_k(n)$ on $E^{\otimes d}$ yields an embedding
$\GL_k(n)\to \End_k(E^{\otimes
  d})$ which extends to a $k$-algebra
homomorphism \[\p\colon k\GL_k(n)\lto S_k(n,d).\]

Restriction of scalars via $\p$ induces a functor from modules over
$S_k(n,d)$ to modules over $k\GL_k(n)$.

\begin{lem}\label{le:res}
  A module over $S_k(n,d)$ yields via restriction of scalars a
  representation of $\GL_k(n)$ over $k$ which is polynomial of degree
  $d$.

  Conversely, a degree $d$ polynomial representation $\rho\colon
  \GL_k(n)\to \End_k(V)$ yields a linear action of $S_k(n,d)$ on $V$
  which restricts to $\rho$ provided there exists a polynomial lifting
  $\hat\rho\colon \End_k(k^n)\to \End_k(V)$ which can be extended to a
  $k$-algebra homomorphism $S_k(n,d)\to\End_k(V)$.
\end{lem}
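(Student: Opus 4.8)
The plan is to route both directions through the $d$-th tensor power map
\[\delta\colon \End_k(E)\lto\End_k(E^{\otimes d}),\qquad \a\longmapsto\a^{\otimes d},\]
which is the composite of the diagonal $\a\mapsto\a\otimes\cdots\otimes\a$ into $\End_k(E)^{\otimes d}$ with the canonical $k$-algebra homomorphism $\End_k(E)^{\otimes d}\to\End_k(E^{\otimes d})$. I would first record three properties of $\delta$. (i) It is homogeneous polynomial of degree $d$: in the basis of $\End_k(E)$ given by the matrix units the coordinates of $\a\otimes\cdots\otimes\a$ are monomials of degree $d$ in the entries of $\a$, and postcomposing with a $k$-linear map preserves this shape. (ii) It takes values in $S_k(n,d)$: from the $\mathfrak S_d$-action \eqref{eq:action} one gets $\s\comp\a^{\otimes d}=\a^{\otimes d}\comp\s$ for every $\s\in\mathfrak S_d$, so $\a^{\otimes d}$ is invariant under the conjugation action. (iii) Restricted to $\GL_k(n)$ the map $\delta$ is exactly the diagonal action, hence agrees there with $\p$.

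For the first assertion, let $V$ be an $S_k(n,d)$-module with structure map the $k$-algebra homomorphism $\psi\colon S_k(n,d)\to\End_k(V)$; then restriction of scalars along $\p$ gives the representation $\rho=\psi\comp\p\colon\GL_k(n)\to\End_k(V)$. I would take $\hat\rho=\psi\comp\delta\colon\End_k(E)\to\End_k(V)$ as the candidate lifting. By (i), together with the elementary fact that a $k$-linear map composed with a homogeneous polynomial map of degree $d$ is again homogeneous polynomial of degree $d$, the map $\hat\rho$ is homogeneous polynomial of degree $d$; and by (iii), $\hat\rho(\a)=\psi(\a^{\otimes d})=\psi(\p(\a))=\rho(\a)$ for $\a\in\GL_k(n)$, so $\hat\rho$ lifts $\rho$. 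Hence $\rho$ is polynomial of degree $d$.

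For the converse, I would read the hypothesis as: there is a homogeneous polynomial lifting $\hat\rho\colon\End_k(E)\to\End_k(V)$ of $\rho$ and a $k$-algebra homomorphism $\tilde\psi\colon S_k(n,d)\to\End_k(V)$ with $\tilde\psi\comp\delta=\hat\rho$, this being the natural meaning of ``$\hat\rho$ extends to a $k$-algebra homomorphism $S_k(n,d)\to\End_k(V)$'' since $\delta$ is precisely the map along which $\End_k(E)$ sits inside $S_k(n,d)$. Then $\tilde\psi$ defines a linear action of $S_k(n,d)$ on $V$, and restriction of scalars along $\p$ recovers $\rho$ because $\tilde\psi(\p(\a))=\tilde\psi(\a^{\otimes d})=\hat\rho(\a)=\rho(\a)$ for $\a\in\GL_k(n)$, using (iii) and that $\hat\rho$ lifts $\rho$. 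I do not expect a genuine obstacle here; the only steps needing a little care are the verification of (i) (a routine matrix-unit computation, for which it is convenient that $E=k^n$ is finitely generated free, so that $\End_k(E)^{\otimes d}\to\End_k(E^{\otimes d})$ is in fact an isomorphism), and pinning down the correct interpretation of ``extends to a $k$-algebra homomorphism'' in the converse, namely the factorization $\hat\rho=\tilde\psi\comp\delta$ through the non-additive map $\delta$.
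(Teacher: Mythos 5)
Your argument is correct and is essentially the paper's own (very terse) proof spelled out: the paper simply says ``Use Lemma~\ref{le:sym}'', and your map $\delta(\a)=\a^{\otimes d}$ is exactly $\gamma_d$ under the identification $S_k(n,d)\cong\TS^d(\End_k(k^n))$, so your factorizations $\hat\rho=\psi\comp\delta$ and $\hat\rho=\tilde\psi\comp\delta$ are precisely the factorizations through $\gamma_d$ that Lemma~\ref{le:sym} provides, with the easy direction of that lemma verified by hand via matrix units.
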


The proof  requires some preparation. For a $k$-module
$M$ and an integer $d\ge 0$ let 
\[\TS^d(M)=\{x\in M^{\otimes d}\mid \s x=x\text{ for all
}\s\in\mathfrak S_d\}\] denote the $k$-module of \emph{degree $d$
  symmetric tensors}, where $\mathfrak S_d$ acts on $M^{\otimes d}$
via \eqref{eq:action}. Each $x\in M$ yields an element
$\gamma_d(x)=x\otimes\ldots\otimes x$ in $\TS^d(M)$.

The next lemma is  useful since  \[S_k(n,d)\cong
  \TS^d(\End_k(k^n)).\]

\begin{lem}[{\cite[IV.5, Proposition~13]{Bo1981}}]\label{le:sym} 
Let $M,N$ be
$k$-modules such that $M$ is free. Then a map $f\colon
  M\to N$ is homogeneous polynomial of degree $d$ if and only if there exists a
  $k$-linear map $h\colon\TS^d(M)\to N$ such that
  $f(x)=h(\gamma_d(x))$ for all $x\in M$. \qed
\end{lem}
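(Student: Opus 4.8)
The plan is to route both assertions through a single observation: the \emph{$d$-th tensor power} map
\[\End_k(E)\lto S_k(n,d),\qquad a\longmapsto a^{\otimes d},\]
coincides with the composite of $\gamma_d\colon\End_k(E)\to\TS^d(\End_k(E))$ with the isomorphism $\TS^d(\End_k(E))\xrightarrow{\sim}S_k(n,d)$. First I would record why this isomorphism behaves well. Since $E=k^n$ is free of finite rank, the canonical map $\End_k(E)^{\otimes d}\to\End_k(E^{\otimes d})$ is bijective and carries the permutation action of $\mathfrak S_d$ on the source to the conjugation action on the target; hence it restricts to an isomorphism on $\mathfrak S_d$-fixed points $\TS^d(\End_k(E))\xrightarrow{\sim}\End_k(E^{\otimes d})^{\mathfrak S_d}=S_k(n,d)$. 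Under it the symmetric tensor $\gamma_d(a)=a\otimes\dots\otimes a$ is sent to the endomorphism $a^{\otimes d}$ of $E^{\otimes d}$, and for $g\in\GL_k(n)$ the element $\p(g)$ is by definition the diagonal action $g^{\otimes d}$. This bookkeeping is the only place where care is needed, and I expect the equivariance check to be the main (though routine) obstacle; the two halves of the statement are then formal.

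For the first assertion, let $V$ be an $S_k(n,d)$-module, given by a $k$-algebra homomorphism $\theta\colon S_k(n,d)\to\End_k(V)$. Restriction of scalars along $\p$ makes $V$ a representation of $\GL_k(n)$ via $g\mapsto\theta(\p(g))=\theta(g^{\otimes d})$. I would then define $\hat\rho\colon\End_k(E)\to\End_k(V)$ by $\hat\rho(a)=\theta(a^{\otimes d})$. By the identification of the first paragraph, $\hat\rho=h\circ\gamma_d$, where $h\colon\TS^d(\End_k(E))\to\End_k(V)$ is the $k$-linear composite of the isomorphism with $\theta$. Since $\End_k(E)$ is free, Lemma~\ref{le:sym} shows $\hat\rho$ is homogeneous polynomial of degree $d$; moreover $\hat\rho$ restricts on $\GL_k(n)$ to $g\mapsto\theta(g^{\otimes d})$, which is exactly the representation produced by restriction of scalars. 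Hence that representation is polynomial of degree $d$.

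For the converse, suppose $\rho\colon\GL_k(n)\to\End_k(V)$ is polynomial of degree $d$ with a polynomial lifting $\hat\rho\colon\End_k(E)\to\End_k(V)$. By Lemma~\ref{le:sym} there is a unique $k$-linear map $h\colon\TS^d(\End_k(E))\to\End_k(V)$ with $\hat\rho=h\circ\gamma_d$; transporting along the isomorphism yields a $k$-linear map $\theta\colon S_k(n,d)\to\End_k(V)$ satisfying $\theta(a^{\otimes d})=\hat\rho(a)$. The hypothesis is precisely that this $\theta$ may be taken to be a $k$-algebra homomorphism, so it defines a linear action of $S_k(n,d)$ on $V$. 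It then remains to verify that restriction of scalars along $\p$ recovers $\rho$: for $g\in\GL_k(n)$ one computes $\theta(\p(g))=\theta(g^{\otimes d})=h(\gamma_d(g))=\hat\rho(g)=\rho(g)$, using that $\hat\rho$ lifts $\rho$. This completes the plan; both halves are formal once the bridging identification of the first paragraph is in place.
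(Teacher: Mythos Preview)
Your proposal does not address the stated lemma. Lemma~\ref{le:sym} is the Bourbaki characterization of homogeneous polynomial maps $f\colon M\to N$ (for $M$ free) as precisely the maps of the form $h\circ\gamma_d$ with $h\colon\TS^d(M)\to N$ $k$-linear. What you have written is instead a proof of Lemma~\ref{le:res}: you start from an $S_k(n,d)$-module, produce a degree $d$ polynomial representation, and then handle the partial converse. In doing so you \emph{invoke} Lemma~\ref{le:sym} twice as a black box rather than proving it. Your argument is in fact essentially the paper's proof of Lemma~\ref{le:res} (which reads in its entirety ``Use Lemma~\ref{le:sym}''), expanded with the identification $\TS^d(\End_k(E))\cong S_k(n,d)$ made explicit; but it is attached to the wrong target.

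A proof of the actual statement proceeds differently and does not mention $\GL_k(n)$ or Schur algebras at all. Fix a basis $(x_i)_{i\in I}$ of $M$. Then $M^{\otimes d}$ is free on the pure tensors $x_{i_1}\otimes\cdots\otimes x_{i_d}$, the $\mathfrak S_d$-action permutes these, and $\TS^d(M)$ is free on the orbit sums $e_\nu=\sum x_{i_1}\otimes\cdots\otimes x_{i_d}$ (sum over all sequences of content $\nu$), one for each $\nu\in\bbN^{(I)}$ with $|\nu|=d$. Expanding $\gamma_d\big(\sum_i\lambda_i x_i\big)=\big(\sum_i\lambda_i x_i\big)^{\otimes d}$ gives $\sum_\nu\lambda^\nu e_\nu$. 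Hence a $k$-linear $h\colon\TS^d(M)\to N$ is the same datum as a family $(y_\nu)$ with $y_\nu=h(e_\nu)$, and $h(\gamma_d(x))=\sum_\nu\lambda^\nu y_\nu$, which is exactly the defining formula for a homogeneous polynomial map of degree $d$. The paper itself does not supply this argument, merely citing \cite[IV.5, Proposition~13]{Bo1981}.
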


\begin{proof}[Proof of Lemma~\ref{le:res}] Use
  Lemma~\ref{le:sym}. \end{proof}

The following theorem clarifies the relation between degree $d$
polynomial representations of $\GL_k(n)$ and modules over the Schur
algebra $S_k(n,d)$. This requires another definition.

\begin{defn}\label{de:epi}
Let $\p\colon A\to B$ be a $k$-algebra homomorphism. We call $\p$ a
\emph{strong epimorphism} of $k$-algebras if  the following holds.
\begin{enumerate}
\item The map $\p$ is an epimorphism of $k$-algebras, that is,
  given $k$-algebra homomorphisms $\psi_1,\psi_2\colon B\to C$ such
  that $\psi_1\p=\psi_2\p$, then $\psi_1=\psi_2$. \item An
  $A$-module $V$  is the restriction of a $B$-module if the
  canonical map $A\to \End_k(V)$ factors through $\p$ via a $k$-linear
  map $B\to \End_k(V)$. \end{enumerate} \end{defn}

Clearly, every surjective algebra homomorphism is a strong
epimorphism. However, the example $\bbZ\to \bbQ$ shows that the
converse is not true. 

\begin{thm}\label{th:main}
  The canonical functor from modules over the Schur algebra $S_k(n,d)$ to degree
  $d$ polynomial representations of $\GL_k(n)$ is an equivalence if
  and only if the $k$-algebra homomorphism $\p\colon k\GL_k(n)\to
  S_k(n,d)$ is a strong epimorphism.
\end{thm}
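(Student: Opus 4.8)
The plan is to unwind the two conditions in Definition~\ref{de:epi} and show that together they are exactly what is needed for the canonical functor to be fully faithful and essentially surjective. Write $\Phi\colon S_k(n,d)\text{-Mod}\to \mathsf{Rep}^d_k(\GL_k(n))$ for the canonical functor, which by Lemma~\ref{le:res} is restriction of scalars along $\p$. The first observation is that for any $k$-algebra homomorphism $\p\colon A\to B$, restriction of scalars $B\text{-Mod}\to A\text{-Mod}$ is fully faithful if and only if $\p$ is an epimorphism of $k$-algebras: this is the standard characterisation of ring epimorphisms (one direction is immediate since a $B$-module map is automatically an $A$-module map; the converse recovers $\psi_1=\psi_2$ by testing on $C=\End_k(B\oplus B)$ or, more cleanly, using that $B\otimes_A B\to B$ is an isomorphism). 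So condition~(1) is precisely fullness plus faithfulness of $\Phi$ as a functor to $k\GL_k(n)$-modules; since $\mathsf{Rep}^d_k$ is a full subcategory of $k\GL_k(n)$-modules, $\Phi$ is automatically faithful, and it is full onto its image regardless, so (1) is equivalent to $\Phi$ being full and faithful as a functor into $\mathsf{Rep}^d_k$.

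Next I would identify the essential image of $\Phi$ with exactly those polynomial representations to which condition~(2) applies. By Lemma~\ref{le:res}, a degree $d$ polynomial representation $\rho\colon\GL_k(n)\to\End_k(V)$ lies in the essential image of $\Phi$ if and only if there is a polynomial lifting $\hat\rho$ which extends to a $k$-algebra homomorphism $S_k(n,d)\to\End_k(V)$; and via the isomorphism $S_k(n,d)\cong\TS^d(\End_k(k^n))$ together with Lemma~\ref{le:sym}, a polynomial lifting $\hat\rho$ of degree $d$ is the \emph{same datum} as a $k$-linear map $h\colon S_k(n,d)\to\End_k(V)$ with $\hat\rho(\a)=h(\gamma_d(\a))$. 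Thus: $V$ is in the essential image $\iff$ the canonical map $k\GL_k(n)\to\End_k(V)$ factors through $\p$ via a $k$-linear map $S_k(n,d)\to\End_k(V)$ that is moreover an algebra map. Condition~(2) of Definition~\ref{de:epi} asserts that whenever the canonical map merely factors $k$-linearly through $\p$, the resulting $S_k(n,d)$-action is automatically a module, i.e.\ the $k$-linear factorisation is automatically an algebra map. Hence (2) says precisely that every degree $d$ polynomial representation is in the essential image of $\Phi$, i.e.\ $\Phi$ is essentially surjective. (One must check the minor point that the algebra map produced by (2) does restrict to the given $\rho$ along $\p$, which is immediate from the factorisation.)

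Putting the two equivalences together: $\p$ is a strong epimorphism $\iff$ $\Phi$ is full, faithful, and essentially surjective $\iff$ $\Phi$ is an equivalence. The one genuine subtlety — the step I expect to be the main obstacle to a clean writeup — is the precise dictionary in the essential-surjectivity direction: one needs that the $k$-linear map $S_k(n,d)\to\End_k(V)$ coming from a polynomial lifting via Lemma~\ref{le:sym} is the same as the $k$-linear factorisation $B\to\End_k(V)$ of the canonical map $A=k\GL_k(n)\to\End_k(V)$ appearing in Definition~\ref{de:epi}(2). This amounts to checking that $\p\colon k\GL_k(n)\to S_k(n,d)$ sends a group element $\a\in\GL_k(n)$ to $\gamma_d(\a)\in\TS^d(\End_k(k^n))$ under the identification $S_k(n,d)\cong\TS^d(\End_k(k^n))$ — which is exactly the statement that the diagonal action $\a\mapsto\a^{\otimes d}$ is the composite of $\p$ with that isomorphism. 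Once this compatibility is in place, everything else is a formal rephrasing of the definitions.
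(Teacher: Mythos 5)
Your proof is correct and takes essentially the same route as the paper: condition (1) of Definition~\ref{de:epi} is identified with full faithfulness of restriction along $\p$ via the standard ring-epimorphism criterion (the paper cites Stenstr\"om), and condition (2) is identified with essential surjectivity onto degree $d$ polynomial representations via Lemma~\ref{le:sym} and the compatibility $\p(\a)=\gamma_d(\a)$ under $S_k(n,d)\cong\TS^d(\End_k(k^n))$, exactly as in the paper. One minor caveat: Definition~\ref{de:epi}(2) asserts only that $V$ admits \emph{some} $S_k(n,d)$-module structure restricting to the given action, not that the given $k$-linear factorisation is itself an algebra map, but your identification of (2) with essential surjectivity goes through unchanged with the weaker (correct) reading.
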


\begin{rem} 
  (1) Schur-Weyl duality asserts that the canonical map $\p$ is
  surjective.  For example, this holds when $k$ is an infinite field (cf.\ (2.4b)
  and (2.6c) in \cite{Gr1980}) or a finite field having more than $d$
  elements (cf.\ Theorem~4.3 in \cite{BD2009}).

  (2) Benson and Doty \cite{BD2009} noticed that Schur-Weyl duality
  fails for $k=\bbF_2$ and $n=2=d$. More precisely, the canonical map
  $\p$ is not an epimorphism in this case. Let $k[\e]$ denote the
  algebra of dual numbers ($\e^2=0$).  We have
  \[\bbF_2\GL_{\bbF_2}(2)\cong \bbF_2 [\e] \times \bbF_2 [\e] \times
  \bbF_2 [\e] \] while $S_{\bbF_2}(2,2)$ is Morita equivalent to the
  Auslander algebra \cite[III.4]{Au1971} of $\bbF_2 [\e]$. It
  follows that modules over the Schur algebra cannot embed fully
  faithfully into the category of modules over
  $\bbF_2\GL_{\bbF_2}(2)$.

(3) The failure of Schur-Weyl duality for $k=\bbF_2$ implies that the
  canonical map
\[\bbZ\GL_\bbZ(n)\lto S_\bbZ(n,d)\] is not
surjective for $n=2=d$, since $S_k(n,d)\cong S_\bbZ(n,d)\otimes_\bbZ k$.
\end{rem}

\begin{proof}[Proof of Theorem~\ref{th:main}]
From Lemma~\ref{le:res} we know that the restriction of a
  $S_k(n,d)$-module is a polynomial representation of degree $d$.

  Restriction via $\p$ induces a fully faithful functor from
  modules over $S_k(n,d)$ to modules over $k\GL_k(n)$ if and only if
  $\p$ is an
  epimorphism of rings; this is well-known \cite[XI.1]{St1975}. 

  Let us consider the condition (2) in Definition~\ref{de:epi}. We
  verify this, assuming that every degee
  $d$ polynomial representation is in the image of the restriction
  functor. To
  this end fix a $k\GL_k(n)$-module $V$ such that the canonical map
  $k\GL_k(n)\to \End_k(V)$ factors through $\p$ via a $k$-linear
  map $ S_k(n,d)\to \End_k(V)$. Applying Lemma~\ref{le:sym}, this means that $V$ is a
  degree $d$ polynomial representation of $\GL_k(n)$ and therefore the
  restriction of a $S_k(n,d)$-module.

  Now suppose that $\p$ satisfies condition (2)  in
  Definition~\ref{de:epi} and fix a degree $d$
  polynomial representation $\rho\colon \GL_k(n)\to\End_k(V)$.
  It follows from Lemma~\ref{le:sym} that the lifting
  $\hat\rho\colon \End_k(k^n)\to \End_k(V)$ yields a $k$-linear map
  $S_k(n,d)\to\End_k(V)$. Thus the map $k\GL_k(n)\to \End_k(V)$ extending $\rho$
  factors through $\p$. The assumption
  on $\p$ implies that the representation $\rho$ lies in the image
  of the restriction functor.
\end{proof}

The statement of Theorem~\ref{th:main} can be strengthened when $k$ is
field. This follows from the  lemma below.

\begin{lem}
  Let $k$ be a field. Then a $k$-algebra homomorphism $\p\colon A\to
  B$ is a strong epimorphism if and only if $\p$ is surjective.
\end{lem}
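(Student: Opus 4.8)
Only the implication ``strong epimorphism $\Rightarrow$ surjective'' requires an argument, the converse being the observation recorded after Definition~\ref{de:epi} (for a surjective $\p$, any $k$-linear factorisation $B\to\End_k(V)$ of the structure map $A\to\End_k(V)$ is automatically unital and multiplicative, so $V$ is a $B$-module, and condition~(1) is clear). So I would assume $\p\colon A\to B$ is a strong epimorphism with $B':=\Im\p\subsetneq B$ and derive a contradiction. By condition~(1) of Definition~\ref{de:epi} the map $\p$ is an epimorphism, so restriction of scalars along $\p$ is a fully faithful functor $\Mod B\to\Mod A$ \cite[XI.1]{St1975}. The plan is to test condition~(2) on a single $A$-module: the cokernel $V:=B/B'$, where $A$ acts on $B$ through $\p$ by left multiplication and $B'=\p(A)$ is the resulting $A$-submodule.

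The first step is to show $V$ is not the restriction of a $B$-module. The quotient map $\pi\colon B\to B/B'$ is $A$-linear for the actions just described. If $V$ were the restriction of a $B$-module, then $\pi$, being a morphism between restrictions of $B$-modules, would be $B$-linear by full faithfulness; but a $B$-linear map out of $B$ is determined by the image of $1_B$, and $\pi(1_B)=1_B+B'=0$ because $1_B\in B'$, so $\pi=0$ --- impossible, since $\pi$ is surjective onto $B/B'\neq 0$.

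The second step is to check that $V$ satisfies the hypothesis of condition~(2). The structure map $\rho\colon A\to\End_k(V)$ factors as $\rho=\rho'\circ\p$, where $\rho'\colon B'\to\End_k(V)$ sends $b'$ to the operator $\bar x\mapsto\overline{b'x}$ --- this is well defined precisely because $B'$ is a subalgebra. Here is the only place the hypothesis that $k$ is a field is used: $B'$ is then a $k$-subspace of $B$, hence a direct summand, so $\rho'$ extends to a $k$-linear map $\beta\colon B\to\End_k(V)$ with $\beta\circ\p=\rho$. Condition~(2) now applies and asserts that $V$ is the restriction of a $B$-module, contradicting the first step. Hence $B'=B$, i.e.\ $\p$ is surjective.

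I expect the only genuinely non-formal point to be the choice of test module: one has to recognise that $B/\Im\p$ is exactly the $A$-module that condition~(2) can detect while condition~(1) cannot. The field hypothesis is used solely to split $B'$ off $B$ as $k$-modules --- which is what makes condition~(2) applicable and which can fail over a general base ring, as already witnessed by $\bbZ\hookrightarrow\bbQ$. Everything else is routine, using only the standard fact that restriction along a ring epimorphism is fully faithful.
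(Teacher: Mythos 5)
Your proof is correct, and it runs on the same engine as the paper's: condition~(1) of Definition~\ref{de:epi} gives full faithfulness of restriction along the ring epimorphism $\p$, and the field hypothesis is used exactly as in the paper, namely to extend a map defined on the image $B'=\Im\p$ (a $k$-subspace, hence a direct summand) to a $k$-linear map on all of $B$, so that condition~(2) becomes applicable. Where you differ is the endgame. The paper applies this extension argument to every $k$-algebra homomorphism $B'\to\End_k(V)$, concludes that restriction along the inclusion $B'\to B$ is an equivalence between $\Mod B$ and $\Mod B'$, and then simply asserts that $B'=B$ follows. You instead test condition~(2) on the single module $V=B/B'$: the structure map factors through $\p$ because $B'$ is a subalgebra and splits off $B$ as a $k$-module, condition~(2) then makes $V$ the restriction of a $B$-module, and full faithfulness of restriction along $\p$ forces the quotient map $B\to B/B'$ to be $B$-linear, hence zero since $1_B\in B'$ --- a contradiction unless $B'=B$. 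This is a concrete instantiation of the paper's argument, and it in fact supplies exactly the justification that the paper's final sentence leaves implicit: one natural way to see that such an equivalence of module categories forces $B'=B$ is your computation with $B/B'$. Your treatment of the converse direction agrees with the remark following Definition~\ref{de:epi}, and the step where full faithfulness upgrades the $A$-linear map $\pi$ to a $B$-linear one is legitimate because $\p$ is an epimorphism, so all steps check out.
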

\begin{proof}
  One direction is clear. So suppose that $\p$ is a strong epimorphism
  and denote by $A'$ is image. Given a $k$-module $V$, every
  $k$-algebra homomorphism $A'\to \End_k(V)$ can be extended to a
  $k$-linear map $B\to\End_k(V)$ since $k$ is field. Thus restriction
  along the inclusion $A'\to B$ induces an equivalence between modules
  over $B$ and modules over $A'$. It follows that $A'=B$.
\end{proof}

\begin{cor}
Let $k$ be a field. Then the following are equivalent.
\begin{enumerate}
\item  Schur-Weyl duality holds, that is,  $\p\colon k\GL_k(n)\to
  S_k(n,d)$ is surjective.
\item The category of modules over the Schur algebra $S_k(n,d)$ is
  equivalent (via $\p$) to the category of degree $d$ polynomial
  representations of $\GL_k(n)$.\qed
\end{enumerate}
\end{cor}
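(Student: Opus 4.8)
The plan is to derive the corollary as a formal consequence of Theorem~\ref{th:main} together with the Lemma immediately preceding it; no genuinely new argument is needed. First I would unwind the two conditions. Condition (1) is, by definition, the surjectivity of $\p\colon k\GL_k(n)\to S_k(n,d)$. Condition (2) says precisely that the canonical restriction-of-scalars functor along $\p$ — the same functor that appears in Theorem~\ref{th:main}, and which lands in degree $d$ polynomial representations by Lemma~\ref{le:res} — is an equivalence of categories.

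Next I would apply the two biconditionals already at hand in sequence. By Theorem~\ref{th:main}, the canonical functor is an equivalence if and only if $\p$ is a strong epimorphism of $k$-algebras. By the preceding Lemma, applied with $A=k\GL_k(n)$ and $B=S_k(n,d)$ and using that $k$ is a field, $\p$ is a strong epimorphism if and only if $\p$ is surjective. Concatenating gives: (2) holds $\iff$ $\p$ is a strong epimorphism $\iff$ $\p$ is surjective $\iff$ (1) holds. That chain of equivalences is exactly the assertion of the corollary.

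The only point requiring attention is bookkeeping rather than mathematics: one should confirm that the ``canonical functor'' named in the corollary is literally the restriction functor of Theorem~\ref{th:main}, and that the phrase ``Schur-Weyl duality holds'' in (1) is precisely the surjectivity hypothesis on $\p$. Once those identifications are made explicit, I expect no remaining obstacle — all the substantive content (the fully faithfulness criterion for ring epimorphisms, the essential-image analysis via symmetric tensors in Lemma~\ref{le:sym}, and the collapse of ``strong epimorphism'' to ``surjective'' over a field) has already been established in the cited results, so the proof reduces to a one-line invocation of Theorem~\ref{th:main} and the Lemma.
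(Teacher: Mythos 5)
Your proposal is correct and matches the paper's intended argument exactly: the corollary is stated with a \qed precisely because it follows by concatenating Theorem~\ref{th:main} with the preceding Lemma (strong epimorphism $\iff$ surjective over a field), which is just what you do. The bookkeeping points you flag (identifying the canonical functor and the meaning of ``Schur-Weyl duality holds'') are handled the same way in the paper, so nothing further is needed.
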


\begin{center}
*\quad *\quad *
\end{center}

I am grateful to Steve Donkin, Steve Doty, and Andrew Hubery for helpful comments.

\end{document}